\title{Covering simplicial game complex}
\author{Neda Shojaee\footnote{\tt shojaee$_{-}$neda@aut.ac.ir}, Morteza M. Rezaii\footnote{\tt mmreza@aut.ac.ir} \\
	\small Amirkabir University of Technology, \\
	\small Polytechnic of Teheran,\\
	\small Hafez Ave, Tehran, Iran. 
}
\date{}
\DeclareMathAlphabet{\mathpzc}{OT1}{pzc}{m}{it}
\newtcolorbox{mybox}{boxrule=0pt,width=\textwidth,arc=0mm,colback=green!20}
\newtheorem{theorem}{Theorem}[section]
\newtheorem{definition}[theorem]{Definition}
\newtheorem{example}[theorem]{Example}
\begin{document}
	
\maketitle

\begin{abstract}
In this paper we introduce a simplicial complex representation for finite non-cooperative games in the strategic form. The covering space of the simplicial game complex is introduced and we show that the covering complex is a powerful tool to find Nash Equilibrium simplices.
This representation allows us to model the cost functions of a game as a weight number on a dual vertex of the strategy situation in some stars. It yields a canonical direct sum decomposition of an arbitrary
game into three components, as the potential, harmonic 
and nonstrategic components. \\
\textbf {Keywords:} simplicial game complex, covering complex, non-cooperative games, harmonic and potential games, mixed Nash equilibrium. \\
\textbf{MSC[2020]:} 91A10, 55N99, 05C21, 05C07.
\end{abstract}

\section{Introduction}
\paragraph{}
Game theory is the study of mathematical model of strategic interaction among rational decision-makers. It has applications in all fields of social science, as well as in logic, biology, economy and computer science\cite{MDS}. Originally, it addressed zero-sum games, in which each participant's gains or losses are exactly balanced by those of the other participants. 
If our theory is more exact and strong then we can analyze more complex natural systems and obtain more optimized strategies.
The modern game theory began with the idea of mixed-strategy equilibria in two-person zero-sum games and its proof used the Brouwer fixed point theorem on continuous mappings into compact convex sets, which became a standard method in game theory and mathematical economics, \cite{JVN}. The second edition of \cite{JVN} provided an axiomatic theory of expected utility, which allowed mathematical statisticians and economists to treat decision-making under uncertainty.
Sarah Egan studied non-cooperative games as a simplicial complex, \cite{SE}. She used combination methods and used total order instead of utility functions. 
Ozan Candogan and his coworker introduced a novel flow representation for finite games in the strategic form in \cite{CMOP}. This representation allows them to develop a canonical direct sum decomposition of an arbitrary game into three components, which are referred to as the potential, harmonic and nonstrategic components. 
Chen Xilin visualized historical data of Japanese economics in 1997-2010 by the means of topological data analysis and found that periods of growth and economical crises display very different topological properties like a number of cavities on associated simplicial complexes, \cite{CX}.
In the last decades, it is tried to model games as simplicial complexes, but these methods either are so abstract or just model special cases of games. Furthermore, these methods can’t be applied in practical studies.\\
Our main goal is to find a unified method to model non-cooperative games on simplicial complexes and as a result of the different variants of the game theory model, the number of definitions of an equilibrium situation is numerous. At first, we give an algorithm for the game data to give its geometric presentation. For aiming this purpose, we make the situation simplices and define local nerve as a tool to find adjacent simplices, see Definition \ref{def:11}. Then we use the utility functions to make a weight for each simplex. Through this way, we achieve a weighted simplicial complex which is called the game complex and is denoted by $K^*_G$, see Definition \ref{Def:12}. This algorithm needs only the local data of players in the game. By using these local data, it gives us the whole structure of the game as a simplicial complex. In the next section, we equip the game simplicial complex with a discrete metric (\ref{eqn:d2}) and it allows us to define open neighborhoods of each situations by using different stars. Furthermore, we introduce a non-trivial covering space of the game complex. In every star of the covering space, we define the degree of a dual point of each situation. Then we use these degrees to find Nash Equilibrium simplex, in the simplicial game complex. In the last section, we use the dual complex of the game and the dual star operator to give some decomposition of the game complex. We see that the first class corresponds to the well-known potential games and
refer to the second class of games as harmonic games, and the third one is the non-strategic games.
 
\section{Preliminaries}

A non-cooperative game $G$ is a tuple $G= \langle N,\{S_i\}_{i\in N}, \{ u^i\}_{i\in N}\rangle$, where $n$ players numbered $1,2,...,n$ in $N$, $S_i=\{s^1_i,s^2_i,...,s^{l_i}_i\}$ is the set of all pure strategies of the $i$'th player. The set of all strategies is given by $S =\prod_{i=1}^n S_
i$ and its elements are referred to as a strategy profile. Two strategy profiles are comparable if their difference is only in the strategy of a single player. When we emphasize they differ in the $i$'th player, we use the notation $(s_i,s_{\hat{i}})$. For each $ i \in N$, $u_i$ is the $i$'th player's utility function which is defined by $ u_{i} : S \rightarrow \mathbb{R}$.
 A (pure) Nash equilibrium is a strategy profile for which no player can unilaterally deviate and improve its payoff. Formally, a pure strategy profile $ s=(s^*_i,s_{\hat{i}})$ is a Nash equilibrium if
\begin{equation*}
u_i(s^*_i,s_{\hat{i}}) \geq u_i(s_i, s_{\hat{i}}), 
\end{equation*}
for every $ {s_i} \in {S_i} $ and every $ i \in N$. In \cite{CMOP}, pure strategy profiles are considered as a vertex set $V$, and every two comparable strategy profiles are joined by a directed edge from the strategy by smaller utility function to the greater one. Through this method, each non-cooperative game is modeled as a graph with weighted edges as follows,
\begin{align*}
	w_i&:S\times S\rightarrow \mathbb{R},\\
	w_i(s,\tilde{s})&=u_i(s)-u_i(\tilde{s}),
\end{align*} 
where $s$ and $\tilde{s}$ are two $i$- comparable strategy profiles, for pairs $(s,\tilde{s})$ which are not comparable $w_i(s,\tilde{s})$ is vanished.\\
Since, players do not have a rational description to choose a pure strategy in some games, they randomly select a pure strategy. In these games, we have mixed situations which is a discrete random variable $(S,P)$ where $P=\prod_{i=1}^n P_i$, and each $P_i$ is the set of probability distributions on $S_i$. Each strategy situation in this set is denoted by $X=(x_{1},\dots,x_{n})$, and each $x_{i}$ is a probability distribution on $S_i$. In this case, the $i$'th player's payoff function is defined by, \cite{JoelWatson}
\begin{equation}\label{eqn:1}
	e_i(X)=\sum_{s\in S}u_i(s)\prod_{j\in N}x_j(s_j),
\end{equation}
where $s=(s_1,s_2,\dots,s_n)$. Comparable situations are mixed strategy situations, which are different in the one player's situation. 
For a fixed $j_i\in \{1.\dots,l_i\}$, consider the mixed strategy $x_i$ with $x_{ij_i}=1$ and $x_{ij'_i}=0$ for all $j'_i\in \{1,\dots,l_i\}$ with $j_i\neq j'_i$. This is equivalent to the $i$'th player strategy profile $s^j_i\in S_i$ and this is true for all $i$'th player's strategies, so we have $S_i\subset P_i$.
\begin{definition}\label{Def:1}
	The mixed situation $X=(x_1,\dots,x_n)$ is a mixed Nash Equilibrium if for all $i\in N$ and every $s_i\in S_i$ we have
	\begin{equation}\label{eqn:2}
		e_i(x_i,X_{\hat{i}})\geq e_i(s_i,X_{\hat{i}}).
	\end{equation} 
\end{definition}
Let $K$ be a simplicial complex with vertex set $\{v_0,v_1,\dots,v_n\}$, denote the vector space generated by a basis consisting of oriented $p$-simplices $[v_0,\dots,v_p]$ by $C_p (K;\mathbb{R})$. This is the space of finite formal sums of the oriented $p$-simplices, with coefficients in $\mathbb{R}$. Elements of $C_p(K;\mathbb{R})$ are called $p$-chains. The discrete $p$-forms are maps from the space of $p$-chains to $\mathbb{R}$.
\begin{definition}\cite{ANH}
	A discrete $p$-form $w$ is a homomorphism from the chain space $C_p (K;\mathbb{R})$ to the
	vector space $\mathbb{R}$. Thus a discrete $p$-form is an element of $C^p(K,\mathbb{R}):=Hom(C_p (K),\mathbb{R})$, the space of cochains.
\end{definition}
The boundary operator is a homomorphism defined by
\begin{align*}
	\partial_{p} &: C_p(K;\mathbb{R}) \rightarrow C_{p-1}(K;\mathbb{R}),\\
	\partial_{p} ( \sigma_p) &= \sum_{i=0}^{p} (-1)^{i}[v^{0} , ... , v^{i-1} , v^{i+1} , \dots , v^{p}] .
\end{align*}
This map enables us to define the chain complex as follow
\begin{equation}\label{eqn:3}
	0\rightarrow
	C_{k} \xrightarrow{ \partial_{k-1}} C_{k-1} \xrightarrow{ \partial_{k-2}}\dots \xrightarrow{ \partial_{1}} C_{1}\xrightarrow{ \partial_{0}} C_{0}\rightarrow 0.
\end{equation}
The boundary operator $\partial_{t}$ satisfied $\partial_{t} \circ \partial_{t-1} = 0 $. So $ im \, \partial_{t-1}$ is contained in $ker \, \partial_{t}$ for every $0\leq t \leq n-1 $, and the quotient 
$ ker \, \partial_{t} /  im \, \partial_{t-1} $ is well defined. 
We call this quotient the $t$'th homology group of $K$.
\begin{equation}\nonumber
	H_{t} =\frac{ker \, \partial_{t}} {im \, \partial_{t-1}}. 
\end{equation}
Hence, $H(K) = \bigoplus_{t=0}^{n-1}H_{t}(K)$ is a graded Abelian group which we call it the homology of $K$.
The co-boundary operator is a homomorphism $\partial^*_{p} : C^p(K;\mathbb{R}) \rightarrow C^{p+1}(K;\mathbb{R}),$ defined by
duality to the boundary
operator using the natural bi-linear pairing between discrete forms and chains. Specifically, for a discrete form $w\in C^P(K,\mathbb{R})$ and a chain $\sigma_{p+1}\in C_{p+1}(K,\mathbb{R})$, define $\partial^*_p$ by 
\begin{align*}
	(\partial^*_pw_p,\sigma_{p+1})=(w_p,\partial_{p+1}\sigma_{p+1}).
\end{align*}
This map enables us to define the cochain complex as follow\cite{ANH}
\begin{equation}\label{eqn:0}
	0\leftarrow
	 C^{k} \xleftarrow{ \partial^*_{k-1}} C^{k-1} \xleftarrow{ \partial^*_{k-2}} \dots \xleftarrow{ \partial^*_{1}} C^{1}\xleftarrow{ \partial^*_{0}} C^{0}\leftarrow 0.
\end{equation}	 
For the co-boundary operator, we have $\partial^*_{t-1} \circ\partial^*_{t} =0$, as well. 

\section{Simplicial game Complex} 
Let $G$ be a non-cooperative $n$ player game. The space of situations on $S_i$ is an infinite dimensional vector space. In the practical studies, the space is the finite dimensional subspace according to the assumptions on a problem.
Furthermore, suppose 
$\{x_{i_k}\}_{k=1}^{m_i}$ 
is the set of possible probability distributions on a strategy profile $S_i$. Hence, we have for each $x_{i_k}$ 
\begin{align*}
	x_{i_k}:S_i\rightarrow [0,1],\\
	x_{i_k}(s_{i}^{j_i})=x_{i_kj_i},
\end{align*}
where $\sum_{j_i=1}^{l_i}x_{i_k}(s_{i}^{j_i})=1.$
Consider the set of all strategy situations $P=\prod_{i=1}^nP_i$, where $P_i=span\{x_{i_k}\}_{k=1}^{m_i}$. 
We suppose that the mixed situations $X_{\alpha}$ in $P$ are facets of a simplicial complex which is denoted by $K_G$.
Furthermore, $C_{n-1}(K_G)$ is a finite dimensional vector space with oriented simplices $\{X_{\alpha}\}_{\alpha=1}^{\prod_im_i}$ as its basis, where $X_{\alpha}=[x_{\alpha_1},\dots,x_{\alpha_n}]$.
Fix $1\leq i \leq n$, let 
$X_{\hat{i}}$ 
be a strategy situation which is obtained by omitting the mixed strategy of the $i$'th player, i.e.
$$X_{\hat{i}}=(x_1,\dots,\hat{x_i},\dots,x_n),$$ so $X_{\hat{i}}\in P_{\hat{i}}:=P_1\times P_2\times \dots P_{i-1}\times P_{i+1}\dots \times P_n$. 
For every $0\leq t\leq n-2$, the collection of $t$-dimensional faces of mixed situations is a vector space which is denoted by $C_t(K_G)$. Using the boundary operator allows us to have the chain complex  (\ref{eqn:3}) for $K_G$.
The barycentric point of mixed situations $\sigma_{n-1}=[x_1,\dots,x_{n}]$  
is defined by
\begin{equation}\label{eqn:07}
	bc(\sigma_{n-1})=\sum_{s\in S}\frac{\sum_{j=1}^{n}x_{j}(s^{k_j}_j)x_j}{\sum_{j=1}^n x_{jk_j}}.
\end{equation}
Through extending every pure face $\sigma_t$ $(0\leq t\leq n-2)$, of mixed situations to a $(n-1)$-simplex by adding zero function in suitable situations, we can use equation (\ref{eqn:07}) and find the barycentric point of each pure face and using equation (\ref{eqn:1}) to equip $\sigma_t$ with some weight for every $t$, $0\leq t\leq n-1$. 
Obviously, the barycentric point $bc({\sigma})$ of each simplex $\sigma$ in $K_G$ is inside $\sigma$. Use these barycentric points to make the barycentric subdivision of $K_G$ and denote it by $bcsd(K_G)$.
\begin{definition}\label{Def:12}
The weighted complex ${K}^*_G$ with facets ${\sigma}_{n-1}$ of $K_G$ and all pure simplices which admit weight numbers by
 \begin{equation}\label{eqn:08}
  f_{t}(\sigma_t)=\sum_{i=1}^ne_i(\sigma_t),
 \end{equation}
is called the game complex of the game $G$. So each simplex in $K^*_G$ is in the form $\sigma^*_{t}=(\sigma_t,f_t)$ where $f_{t}:C_t(K_G)\rightarrow \mathbb{R}$.
\end{definition}
An $i$-comparable situation of a game complex $K^*_G$ is a star 
\begin{equation}\label{eqn:4}
	st(X_{\hat{i}})=\bigcup_{x\in P_i}x\vee X_{\hat{i}}=\bigcup_{i_k=1}^{m_i}x_{i_k}\vee X_{\hat{i}},
\end{equation}
for which each situation equip with a weight number $e_i(\sigma)$. So for every $i\in \{1,\dots,n\}$ and each $X_{\alpha_{\hat{i}}}$, the star $st(X_{\alpha_{\hat{i}}})$ is a collection of $i$-comparable situations.
The barycentric point of each simplex in a comparable situation, $st(X_{\hat{i}})$ is
 \begin{equation}\label{eqn:5}
 	bc({x_{i_k}\vee X_{\hat{i}}})=\sum_{(s_i,s_{\hat{i}})\in S}\frac{\sum_{j=1,j\neq i}^{n}x_{j}(s^{k_j}_j)x_j+x_{i_k}(s_i)x_{ik}}{\sum_{j=1,j\neq i}^n x_{jk_j}+x_{i_ki}}.
 \end{equation}
 For convenience, we denote the strategy situation $x_{i_k}\vee X_{\hat{i}}$ by $\sigma_{i_k}$ and its barycentric by $bc({\sigma}_{i_k}).$ 
 Furthermore, we can use the barycentric points to determine components which are used to make the $(n-t-1)$-dimensional dual cell $D(\sigma_t)$ associated to a $t$-simplex ${\sigma}_t$.
 For aiming this purpose, consider the star duality operator as follows
 \begin{align}\label{eqn05}
*:&C_t(K_G,\mathbb{R})\rightarrow C_{n-t-1}(bcsd(K_G),\mathbb{R}),\\\nonumber
*(\sigma_t)=&\sum_{\sigma_t<\dots<\sigma_{n-1}}sign(\sigma_t\dots\sigma_{n-1})[bc({\sigma_t}),\dots,bc({\sigma}_{n-1})],
 \end{align}
where the sign coefficient is chosen as $\pm 1$ for all $0\leq t\leq n-1$ according to the induced orientation, \cite{ANH}. We note that as sets the equation (\ref{eqn05}) form a cell complex identical to $D(K_G)$ which is the dual complex of $K_G$. It is easy to see that $D(\sigma_{n-1})=*{\sigma}_{n-1}=bc(\sigma_{n-1})$ so we call the barycentric points of situation simplices the dual points. If $\sigma_{n-2}$ is a boundary of $K_G$ then we can write  $$D(\sigma_{n-2})=*\sigma_{n-2}=sign(bc(\sigma_{n-2}),bc(\sigma_{n-1}))[bc(\sigma_{n-2}), bc(\sigma_{n-1})],$$
as sets equality since, $$D(\sigma_{n-2})=\bigcup_{\sigma_{n-2}<\sigma_{n-1}\\ \text{and}\\ \sigma_{n-2}<\beta_{n-1}}[bc(\sigma_{n-1}),bc(\sigma_{n-1})],$$
for which its direction is induced by orientation of $\sigma_{n-1}$ and $\beta_{n-1}$. Now, we equip each dual edge by a weight number, as follows. 
Let $\hat{V}$ be the set of its dual vertices, the weighted dual flow is
 \begin{align} \label{eqn:7}
 	\omega &: \hat{V}\times\hat{V} \rightarrow \mathbb{R},\\\nonumber
 	\omega(\hat{\sigma},\hat{\beta}) &=
 	\begin{cases}
 		e_i(\sigma)-e_i(\beta), & \text{if $\sigma$ and $\beta\in st(X_{\hat{i}})$ for some $i$}\\
 		0, & o.w
 	\end{cases},
 \end{align}
 which is a directed edge between dual points with the weight number $\omega(\hat{\sigma},\hat{\beta})$ and direction from the node by smaller weight to the node with bigger one. 
 \begin{definition}\label{def:11}
 Let $K^*_G$ be a simplicial game complex. The spanning tree which is made by the union of dual flows of each comparable star is called the local nerve of $K^*_G$. The gluing of all spanning trees of all comparable stars in $K^*_G$ by a label function $l$ is called the global nerve of $K^*_G$. 
 \end{definition}
It is obvious that there is an edge between two dual vertices if associated simplices have the same adjacent $(n-2)$-simplex.  
For every $i$, we have $|P_{\hat{i}}|$ disjoint comparable stars and consequently local nerves. These local nerves are used as a tool to make the nerve of a whole complex which is called the simplicial game complex. Let $l:\{X_{\alpha}\}\rightarrow m$ be a labeling function.
So each label distinguishes a unique base situation of the space $C_{n-1}(K_G)$. All disjoint nerves are Labeled by $l$. These local nerves connect by gluing to each other through the dual vertices of the same labels. The resulting global graph is the global nerve of a simplicial complex $K^*_G$, cf., Algorithm \ref{algo}. 
\begin{algorithm}
		\caption{Simplicial Game Complex, $K^*_G$}
		\begin{algorithmic}\label{algo}
	\REQUIRE{The situation set $P$, the number of players $n$, M=\{\}, ST=\{\}.}
	\ENSURE{A simplicial game complex, $K^*_G$.}
	\FORALL{strategy situation $X_{\alpha}\in P$}
		\FORALL{$i\in \{1,\dots, n\}$} 
			\STATE \text{construct the star $st({X_{\alpha}}_{\hat{i}})$\;}
			\STATE \text{construct the set $ST=ST\cup\{{st(X_{\alpha}}_{\hat{i}})\}$.} 
			\ENDFOR
		\STATE \text{construct the label function $l:P\rightarrow \{1,\dots,\prod_{i}m_i\}$.}
		\ENDFOR
		\FORALL{$st({X_{\alpha}}_{\hat{i}}) \in ST$} 
		\REPEAT   
		\FORALL{strategy situation $\sigma_j\in ST$} 
		\STATE \text{ compute the dual point $\hat{\sigma_j}$ and associated weight of $\sigma_j$} \STATE \text{ using equations
			(\ref{eqn:5}) and (\ref{eqn:08}), respectively.}
		\ENDFOR
		\STATE \text{put all dual points to a queue, randomly.}
		\STATE {${Nl_{\alpha}}_{\hat{i}}=\{\}$.}
		\WHILE {the queue is not empty} 
		\STATE \text{pop the head dual point $\hat{\sigma_k}$ from the queue.}
		\STATE \text{construct the dual edge $E_k=D(X_{\alpha_k}).$}
		\STATE{${Nl_{\alpha}}_{\hat{i}} = {Nl_{\alpha}}_{\hat{i}}\cup\{E_k\}$.}
		\ENDWHILE
		\STATE{$M = M\cup st({X_{\alpha}}_{\hat{i}}).$}
		\UNTIL{$ST-M=\{\}$}
		\ENDFOR
		\FORALL {$i\in\{1,\dots,\prod_{i=1}^n{m_i}\}$}
			\FORALL {$Nl_{\alpha_{\hat{i}}}\cap Nl_{\beta_{\hat{j}}} $}
				\STATE \text{conspiratorially glue the nerve ${Nl_{\alpha}}_{\hat{i}}$ and ${Nl_{\beta}}_{\hat{j}}$ by $f$,}
				\STATE\text{ for each 
					vertex $\hat{\sigma}$ and $\hat{\gamma}$ in $Nl_{\alpha_{\hat{i}}}\cap Nl_{\beta_{\hat{j}}}$ with $l(\sigma)=l(\gamma)$.}
				\ENDFOR
			\ENDFOR
		\STATE	{randomly choose the vertex of a global nerve, $\hat{\sigma_0}$.}
		\STATE	\text{embed its associated strategy situation, $\sigma_0$ and label it by (\ref{eqn:08}).} 
		\STATE	\text{put all the neighboring vertices of the random leaf to a queue.}  
		\WHILE{the queue is not empty} 
				\STATE \text{pop the head vertex $\hat{\sigma_k}$ from the queue.}
				\STATE	\text{embed its associated simplex $\sigma_k$ and label it by (\ref{eqn:08}).}
				\STATE \text{put the neighboring vertices of $\hat{\sigma_k}$,}
				\STATE\text{which have not been accessed to the queue, yet.}
		\ENDWHILE
\end{algorithmic}
\end{algorithm}
\begin{example}
	Let $G$ be the paper-scissor-crop game. Suppose player $1$ has three mixed strategies, $x_1,x_2,x_3$ and player $2$ has three mixed strategies, $y_1,y_2,y_3$ which are probability distributions on $S_1=\{P,S,C\}=S_2$. The situation simplices are $1$-dimensional oriented edges $[x_i,y_j]$ for $1\leq i,j\leq 3$. It is obvious that there exist $9$ different simplices in the mentioned form which make $P$. Use label function which is different colors in our vertices here as Figure \ref{ln1}. For every $1\leq i\leq 2$ and each edge in $P$, we make $st(X_{\alpha_{\hat{i}}})$ by equation (\ref{eqn:4}). Then make the dual vertex and its weight of situation simplex in each comparable star by equations (\ref{eqn:5}) and (\ref{eqn:08}), respectively. Using these data to make dual flows of each comparable stars and its local nerve. Finally, we make the global nerve by connecting these six local nerves through the same color vertices, see Figure \ref{nerve}. By using the method of Algorithm \ref{algo}, the complex $K_G$ will be Figure \ref{SGC} which is a $1$-dimensional simplicial complex in $\mathbb{R}^3$. Actually, the simplicial game complex is the weighted boundary of Mobius strip.
	\begin{figure}[ht!]
		\begin{center}
			\includegraphics[scale=0.45]{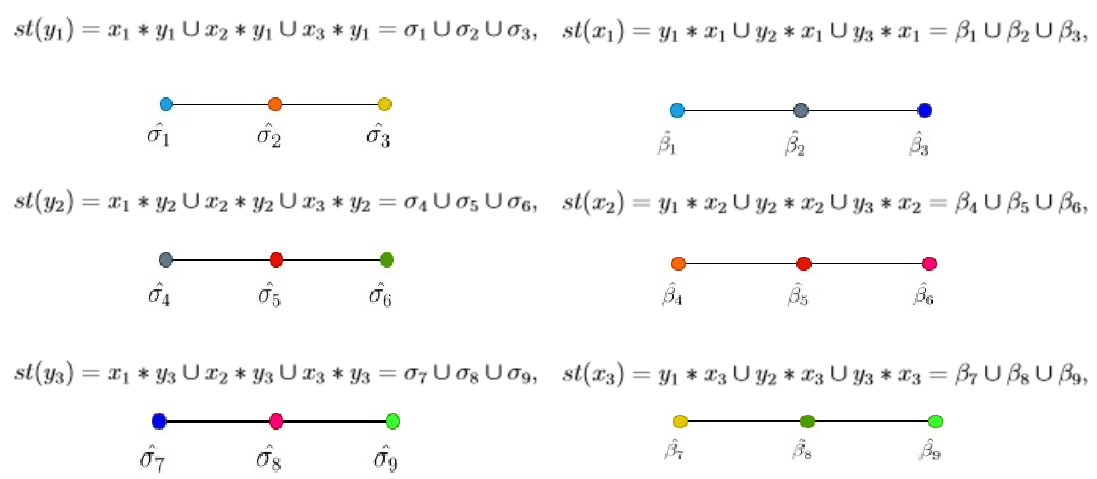}
			\label{ln1}
			\caption{The local nerves, $Nl$}
		\end{center}
	\end{figure}
	\begin{figure}[ht!]
		\begin{center}
			\includegraphics[scale=0.5]{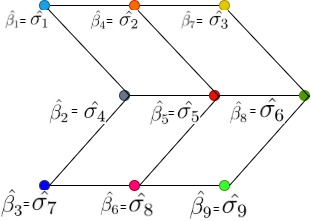}
			\caption{The global nerve, $N$}
			\label{nerve}
		\end{center}
	\end{figure}
	\begin{figure}[ht!]
		\begin{center}
			\includegraphics[scale=0.5]{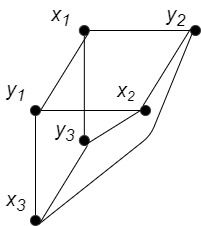}
			\caption{The complex, $K_G$.}
			\label{SGC}
		\end{center}
	\end{figure}
\end{example}

\section{Covering of game Complexes}
For a simplicial complex $K_G$, the discrete metric is defined by 
\begin{align*}
	d &: K_G\times K_G \rightarrow \{0,1\},\\\nonumber
	d(\sigma,\beta) &=
	\begin{cases}
		0, & \text{if $\sigma$ = $\beta$,}\\
		1, & o.w
	\end{cases}.
\end{align*}
The simplicial complex $K_G$ with the metric $d$ is a topological space where each simplex is open in $K_G$. 
Discrete metric on the simplicial game complex ${K}^*_G$ is defined as follows
\begin{align}\label{eqn:d2}
	d^* &:{K}_G^*\times {K}_G^*\rightarrow \{0,1\},\\\nonumber
	d^*(\sigma^*,\beta^*) &=
	\begin{cases}
		0, & \text{if $\sigma$ = $\beta$, and $f(\sigma)$=$f(\beta)$,}\\
		1, & o.w
	\end{cases}.
\end{align}
Obviously, the elements of $C^{n-1}(K_G)$ are situation simplices which are open by discrete topology induced by $d^*$.
For all $s_i^{j_i}\in S_i$, we define $$st(s_i^{j_i})=\bigcup_{X_{\hat{i}}}s_i^{j_i}\vee X_{\hat{i}}.$$ 
Clearly, there exists $\prod_{k=1,k\neq i}^nm_k$ numbers $(n-1)$-dimensional simplices in the above union. The barycentric point of each strategy in $st(s_i^{j_i})$ is
\begin{equation}\label{eqn:6}
	bc(s_i^{j_i}\vee X_{\hat{i}})=\sum_{s_{\hat{i}}\in S_{\hat{i}}}\frac{\sum_{j=1,j\neq i}^{n}x_{j_kt_j}x_{j}+x_i}{1+\sum_{j=1,j\neq i}^n\sum_{t_j=1}^{l_j} x_{j_kt_j}},
\end{equation}
where $x_i(s_i^{j_i})=1$ and $x_i(s_i^{j'_i})=0$. 
Equip each dual points in $st(s_i^{j_i})$ with a weight number by using the expected value number formula (\ref{eqn:1}). 
For each $i\in \{1,\dots,n\}$, we have $l_i$ stars of the form $st(s_i^j)$. Consider the union of these stars
\begin{align}\label{eqn:8}
	\bigcup_{s_i^{j_i}\in S_i}st(s_i^{j_i}).
\end{align}
In (\ref{eqn:8}), we have two kinds of neighborhoods. The neighbors in the $i$'th level $st(s^r_i)$, and neighbors that are defined between different levels $st_s(X_{\hat{i}})=\bigcup_{s^j_i\in S_i}s^j_i\vee X_{\hat{i}}$. In $st(s_i^r)$, every simplex which is $m$-comparable with $m\neq i$ is adjacent in the $i$'th level, and they share an $(n-2)$-simplex so we can define a dual flow between dual points of these situation facets by equation (\ref{eqn:7}), 
\begin{align}
	\omega(\hat{\sigma},\hat{\beta})=e_m(\sigma)-e_m(\beta),
\end{align}
where $\sigma =s^j_i\vee X_{\hat{i}}$ and $\beta=s^j_i\vee Y_{\hat{i}}$.
Furthermore, the neighbors $\bigcup_{s^j_i\in S_i}s^j_i\vee X_{\hat{i}}$ are $i$-comparable strategy profiles with the pure strategy for the $i$'th player. Hence, we can define dual flows in all kinds of neighborhoods.
\begin{definition}
	The number of directions enters the dual of a simplex $\sigma$ in each neighborhood 
	is called its degree on this neighborhood and is denoted by $deg(\hat{\sigma})$.
\end{definition}
 Let $Z_i$ be a nonempty subset of $S_i$ which is defined by
\begin{equation*}
	Z_i:= \{s_i^j|deg(bc(s_i^j\vee X_{\hat{i}}))=l_i-1\},
\end{equation*}
for some neighbor $st_s(X_{\hat{i}})$.
Use elements of $Z_i$ to define sub-neighbors $st_{z}(X_{\hat{i}})=\bigcup_{s_i^j\in Z_i}s_i^j\vee X_{\hat{i}}$ of $st_{s}(X_{\hat{i}})$. Now, for every $i\in \{1,\dots,n\}$ and $s_i^j\in Z_i$ define the subset $A_i(s_i^j)$ of $P_{\hat{i}}$ as a collection of all $X_{\hat{i}}$'s with $deg(bc(s_{i}^j\vee X_{\hat{i}}))=|Z_i|-1$ in the all neighbors $st_{Z_i}(X_{\hat{i}})=\bigcup_{s^{j'}_i\in Z_i}s^{j'}_i\vee X_{\hat{i}},$ which are made by $s_i^j\in Z_i$.
\begin{definition}\cite{RG}\label{def:3.2}
	Let $K$ be a simplicial complex. A pair $(C_K,p)$ is a covering complex of a $K$ if
	\itemize
	\item $C_K$ is a simplicial complex. (It is not necessarily connected.)
	\item $p:C_K\rightarrow K$ is a simplicial map, that is if $[v_0,\dots,v_k]$ is a simplex in $C_K$ then $[p(v_0),\dots,p(v_k)]$ is a simplex in $K$.
	\item For every simplex $\sigma \in K$, $p^{-1}(\sigma)$ is a union of pairwise disjoint simplices, $p^{-1}(\sigma)=\bigcup \tilde{\sigma}_i$, with $p|_{\tilde{\sigma}_i}:\tilde{\sigma}_i\rightarrow \sigma$ a bijection for every $i$.
\end{definition}
Now, we can define the covering complex for a simplicial game complex as follows.
\begin{theorem}
	Let $G$ be an $n$ player non-cooperative game. The simplicial complex
	$$C_G:=\bigvee_{j=1}^n C_j$$
	where $C_j$ is the weighted simplicial complex with simplices $((x_j,X_{\hat j}),e_j)$
	and
	the projection map $p: C_G\rightarrow K^*_G$ which is defined by 
	$$p(((x_1,X_{\hat{1}}),e_1(x_1,X_{\hat{1}}))\vee\dots\vee ((x_n,X_{\hat{n}}),e_n(x_n,X_{\hat{n}})))=(X,f(X)=\sum_{i=1}^ne_{i}(X)),$$
	is the covering complex of a simplicial game complex $K^*_G$.
\end{theorem}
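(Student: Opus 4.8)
The task is to verify, for the pair $(C_G,p)$, the three defining conditions of a covering complex collected in Definition \ref{def:3.2}. First I would pin down the combinatorial content of the objects in play. Each $C_j$ has facets indexed by the pairs $(x_j,X_{\hat j})$, i.e. by the situation simplices $X=(x_1,\dots,x_n)$ of $K_G$; so, as an abstract complex, $C_j$ is a copy of $K_G$, the only difference being that its weight is the single–player payoff $e_j$ rather than $f$, and its lower faces are inherited from $K_G$ via the extension–by–zero convention for the barycentric point (\ref{eqn:07}). Unwinding the notation in the statement, a simplex of $C_G=\bigvee_{j=1}^n C_j$ is then a join $\tilde\sigma=\bigl((x_1,X_{\hat 1}),e_1\bigr)\vee\dots\vee\bigl((x_n,X_{\hat n}),e_n\bigr)$ of faces, one from each sheet $C_j$, lying over a common underlying simplex of $K_G$. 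Since a union of simplicial complexes glued along a subcomplex is again a simplicial complex, $C_G$ is a simplicial complex, not necessarily connected — exactly what the first bullet of Definition \ref{def:3.2} requires.

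Next I would check the second bullet, that $p$ is simplicial. It suffices to exhibit $p$ as induced by a vertex map: on a facet $\tilde\sigma$ lying over the situation $X$, the image $(X,f(X))$ with $f(X)=\sum_{i=1}^n e_i(X)$ is precisely the weighted simplex attached to $X$ in Definition \ref{Def:12}, hence a simplex of $K^*_G$; on a lower pure face $\tilde\sigma_t$ the barycentric construction (\ref{eqn:07}) and the weight formula (\ref{eqn:08}) are applied in $C_j$ in exactly the same way as in $K^*_G$, so $p$ carries $\tilde\sigma_t$ to the face $(\sigma_t,f_t(\sigma_t))$ of $p(\tilde\sigma)$ and therefore commutes with passing to faces. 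Well–definedness is immediate, since $(X,\sum_i e_i(X))$ depends only on the underlying situation; and $p$ is onto, since every $\sigma^*=(X,f(X))$ of $K^*_G$ is the image of the sheet–copies of $X$.

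Then I would treat the third bullet, the fibre condition, which is the heart of the argument. For a simplex $\sigma^*=(X,f(X))$ of $K^*_G$ the preimages are the simplices of $C_G$ lying over $X$; I would show $p^{-1}(\sigma^*)=\bigcup_{j} \tilde\sigma_j$, where $\tilde\sigma_j$ is the copy of $\sigma^*$ living in the sheet $C_j$ (carrying the weight $e_j$), that the $\tilde\sigma_j$ are pairwise disjoint as subcomplexes of $C_G$, and that $p|_{\tilde\sigma_j}$ is a bijection onto $\sigma^*$. Disjointness is read off from the discrete metric $d^*$ of (\ref{eqn:d2}): distinct sheet–copies are distinguished by their sheet label (and, when $e_j(X)\neq e_k(X)$, already by their weights), so $d^*=1$ between distinct preimages and the subcomplexes $\tilde\sigma_j$ meet, if at all, only along the fixed subcomplex along which $\bigvee_j C_j$ is formed. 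Bijectivity of $p|_{\tilde\sigma_j}$ follows because $p$ restricted to a single sheet is injective on simplices — distinct situations $X\neq X'$ yield distinct simplices $(x_j,X_{\hat j})\neq (x'_j,X'_{\hat j})$ — and is surjective onto the faces of $\sigma^*$, each face of $\sigma^*$ with its weight $f_t$ from (\ref{eqn:08}) being the image of the corresponding face of the sheet–copy.

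\textbf{Main obstacle.} I expect the third bullet to be the delicate point, for two reasons. First, one must make sure that the gluing in $\bigvee_{j}C_j$ introduces no spurious identifications that would merge two genuinely distinct simplices of $C_G$ and so destroy either the disjointness of the fibre or the local bijectivity of $p$; this relies on the gluing taking place only along a fixed subcomplex whose $p$–image is controlled. Second, one must establish the compatibility of the weight bookkeeping in \emph{all} dimensions — that the identity $f_t=\sum_{i=1}^n e_i$ holds on every pure face and not merely on facets — so that $p$ is a bijection of weighted simplices on each sheet rather than of bare simplices; this is exactly where the extension convention (\ref{eqn:07}) together with Definition \ref{Def:12} is used. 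Once these two points are in place, the three bullets of Definition \ref{def:3.2} are verified and the theorem follows.
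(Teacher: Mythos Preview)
Your approach takes the statement at face value and verifies the three bullets of Definition \ref{def:3.2} directly. This is a legitimate strategy, but it differs from the paper's argument. The paper's proof does not check the axioms one by one; almost all of it is devoted to \emph{identifying} $C_i$ in terms of the machinery built up in the paragraphs preceding the theorem --- the sets $Z_i\subset S_i$, the neighbourhoods $st_{Z_i}(X_{\hat i})$, and the degree-based sets $A_i(s_i^j)\subset P_{\hat i}$. The paper first argues that each $A_i(s_i^j)$ is nonempty (by comparing weights of dual points), then that $\bigcup_{s_i^j\in Z_i} A_i(s_i^j)\times P_i$ is an open cover of $P$ in the discrete product topology, and finally sets $C_i=(\bigcup_{s_i^j\in Z_i}A_i(s_i^j)\times P_i,\,e_i)$. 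The verification that $p$ satisfies Definition \ref{def:3.2} is then dispatched in a single closing sentence.

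What your route buys is a self-contained axiom check independent of the star/degree apparatus. What the paper's route buys is the explicit description of the sheets as $C_i=(\bigcup A_i(s_i^j)\times P_i,\,e_i)$, and this is not incidental: Definition \ref{def:5} (Best Response) and the Nash-equilibrium theorem immediately afterwards are both phrased in terms of the neighbourhoods $(\bigcup_{X_{\hat i}\in A_i(s_i^j)}X_{\hat i})\times P_i$. So the paper's proof is really doing double duty, establishing the covering property and simultaneously setting up the concrete form of the cover needed downstream; your argument, while a clean verification, bypasses that structural content.

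One technical point worth correcting: you read $\bigvee_j C_j$ as a join and describe a simplex of $C_G$ as a tuple with one component per sheet. The paper's proof states that $C_G$ is ``a disjoint union equip[ped] with the product topology of discrete spaces''. Under that reading your fibre analysis becomes simpler --- the sheets are genuinely disjoint copies, $p^{-1}(\sigma^*)$ is just the $n$ sheet-copies of $\sigma^*$ --- and the ``main obstacle'' you flag about spurious identifications from gluing disappears.
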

\begin{proof}
	For each $X_{\hat{i}}$, construct $st_{Z_i}(X_{\hat{i}})$. Consider the dual points and corresponding weights of all simplices in $st_{Z_i}(X_{\hat{i}})$. Clearly, we can compare these weight numbers and so at least one of them is greater than others. According to the definition $A_i(s_i^j)$, the $A_i(s_i^j)$ is nonempty. Obviously, $A_i(s_i^j)\subset P_{\hat{i}}$ for every $s_i^j\in Z_i$. Since every $X_{\hat{i}}\in P_{\hat{i}}$ achieves its maximum in some $s_i^j\in Z_i$, we deduce $X_{\hat{i}}\in \bigcup_{s_i^j\in Z_i}A_i(s_i^j)$.  Hence, $\bigcup_{s_i^j\in Z_i}A_i(s_i^j)\times P_i$ is an open cover (with respect to the product topology of discrete spaces) for $P$. The cover $\bigcup_{s_i^j\in Z_i}A_i(s_i^j)\times P_i$ induces a cover on $K^*_G$ which its open sets are strategy simplices  $*\sigma_i=((x_i,X_{\hat{i}}),e_i))$, and we denote them by 
	$$C_i:=(\bigcup_{s_i^j\in Z_i}{A}_i(s_i^j)\times P_i,e_i).$$ 
	Clearly, there exist $n$ similar open covers for $i\in\{1,\dots,n\}$.
	So we set
	$$C_{G}=\bigvee_{i=1}^nC_i,$$
	which is a disjoint union equip with the product topology of discrete spaces (\ref{eqn:d2}).
	Now, it is easy to see that the projection map $p$ satisfies in Definition \ref{def:3.2}.
\end{proof}
\begin{definition}\label{def:5}
	For all $i\in N$ and for every $X_{\hat{i}}$, the Best Response $B_i(X_{\hat{i}})$ of the game complex $K^*_G$
	is the projection of an open subset $\vee_{i=1}^n*\sigma_i$ of the covering space $C_G$ with associated dual of $deg(\hat{\sigma_i})=|l_i|-1$ in the associated neighbor $(\bigcup_{X_{\hat{i}}\in A_i(s_i^j)} X_{\hat{i}})\times P_{i}$.
\end{definition}
\begin{theorem}
	The simplex $p(X)$ is a mixed Nash equilibrium simplex iff for all $i\in N$, we have $X\in B_i(X_{\hat{i}})$.
\end{theorem}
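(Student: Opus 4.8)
The plan is to reduce both directions of the equivalence to a single intermediate statement: for every $i\in N$, the component $x_i$ of $X=(x_i,X_{\hat i})$ maximizes $e_i(\cdot,X_{\hat i})$ over $P_i$. On one side this intermediate statement is essentially Definition \ref{Def:1}; on the other side it is exactly what the degree condition in Definition \ref{def:5} records inside the covering complex $C_G$.

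First I would carry out the payoff-linearity reduction. With $X_{\hat i}$ held fixed, the expected payoff (\ref{eqn:1}) is linear in its single argument $x_i$, so $e_i(x_i,X_{\hat i})=\sum_{j_i=1}^{l_i}x_i(s_i^{j_i})\,e_i(s_i^{j_i},X_{\hat i})$, whence $\max_{x_i'\in P_i}e_i(x_i',X_{\hat i})=\max_{s_i\in S_i}e_i(s_i,X_{\hat i})$, the maximum on the left being attained precisely by the distributions $x_i$ supported on best pure responses. Therefore $X$ is a mixed Nash equilibrium in the sense of Definition \ref{Def:1} iff, for each $i$, $e_i(x_i,X_{\hat i})=\max_{s_i\in S_i}e_i(s_i,X_{\hat i})$, i.e. $x_i$ is a best response to $X_{\hat i}$. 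Since $p(X)$ is by convention a Nash equilibrium simplex precisely when the situation $X$ it carries is a mixed Nash equilibrium, the left-hand side of the theorem is now rephrased as ``$x_i$ is a best response to $X_{\hat i}$ for all $i$''.

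Next I would match ``best response'' with the covering-complex side. In the star $st_s(X_{\hat i})=\bigcup_{s_i^j\in S_i}s_i^j\vee X_{\hat i}$ the weighted dual flow (\ref{eqn:7}) orients the dual edge between $s_i^j\vee X_{\hat i}$ and $s_i^{j'}\vee X_{\hat i}$ toward the larger value of $e_i(\cdot,X_{\hat i})$, so $\deg(bc(s_i^j\vee X_{\hat i}))=l_i-1$ exactly when $s_i^j$ is a best pure response to $X_{\hat i}$; that is the defining condition for $s_i^j\in Z_i$ and $X_{\hat i}\in A_i(s_i^j)$. Consequently the neighbor $\big(\bigcup_{X_{\hat i}\in A_i(s_i^j)}X_{\hat i}\big)\times P_i$ collects exactly the situations in which player $i$ plays a best response, and $B_i(X_{\hat i})$, being the image under $p$ of the corresponding open subset $\vee_{j=1}^{n}*\sigma_j$ of $C_G$ (with the degree condition imposed on the $i$'th component only), equals $\{\,X : x_i \text{ is a best response to } X_{\hat i}\,\}$. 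Intersecting over $i\in N$ and invoking the rephrasing from the previous step gives the asserted equivalence.

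I expect the main obstacle to be the bookkeeping of ties among optimal pure strategies: if several $s_i^j$ share the maximal value of $e_i(\cdot,X_{\hat i})$, the dual edges joining their dual points carry weight zero and are unoriented, so no dual point reaches in-degree $l_i-1$ in $st_s(X_{\hat i})$ even though an optimum exists. Handling this is precisely why the argument must pass through the reduced stars $st_{Z_i}(X_{\hat i})$ and the sets $A_i(s_i^j)$: one has to verify that $\bigcup_{s_i^j\in Z_i}A_i(s_i^j)$ still exhausts the $X_{\hat i}$ that admit a best response (the fact already used to build $C_G$ in the preceding theorem) and that in-degree $|Z_i|-1$ inside $st_{Z_i}(X_{\hat i})$ is the correct surrogate for optimality in the presence of ties. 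A secondary point needing care is that ``number of in-directions'' is well defined only after fixing the orientations and signs of the dual cells $*\sigma$; this is inherited from the star duality operator (\ref{eqn05}) and should be cited rather than re-derived.
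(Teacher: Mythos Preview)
Your proposal is correct and follows essentially the same route as the paper: both directions are reduced to the equivalence between the degree condition $\deg(\hat X)=l_i-1$ in the relevant neighbor and the inequalities $e_i(x_i,X_{\hat i})\ge e_i(s_i,X_{\hat i})$ for all $s_i\in S_i$, after which Definition~\ref{Def:1} and Definition~\ref{def:5} are invoked. The paper's own proof is considerably terser---it simply asserts that the degree condition forces $e_i(X)$ to be maximal among $\{e_i(s_i,X_{\hat i})\}_{s_i\in S_i}$ and conversely, without your linearity reduction over $P_i$ (which is unnecessary since Definition~\ref{Def:1} already compares only against pure $s_i$) and without any discussion of ties; your caution about zero-weight dual edges and the role of $Z_i$, $A_i(s_i^j)$ is more careful than what the paper actually records.
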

\begin{proof}
	First, we suppose $X\in B_i(X_{\hat{i}})$ for all $i\in N$. 
	Since $deg(\hat{X})=|l_i|-1$ in $(\bigcup_{X_{\hat{i}}\in A_i(s_i^j)} X_{\hat{i}})\times P_{i}$, the number $e_i(X)$ is the greatest number in $\{e_i(s_i,X_{\hat{i}})\}_{s_i\in S_i\subset P_i}$, and by using definition of degree, we get
	\begin{equation*}
		e_i(x_i,\hat{X}_i)\geq e_i(s_i,\hat{X}_i), \quad\forall s_i\in S_i,
	\end{equation*}
	and it is true for all $i\in N$. So according to the Definition \ref{Def:1}, $X$ is a Nash Equilibrium simplex. Now, if $p(X)$ is a Nash Equilibrium simplex then for every component of $X$ in the covering space, we have $e_i(x_i,\hat{X}_i)\geq e_i(s_i,\hat{X}_i)$ in $(\bigcup_{X_{\hat{i}}\in A_i(s_i^j)} X_{\hat{i}})\times P_{i}$ for all $i$. It means that 
	\begin{equation*}
		e_i(x_i,X_{\hat{i}})- e_i(s_i,X_{\hat{i}})\geq 0, \quad\forall s_i\in S_i,
	\end{equation*}	
	so $deg(\hat{X})=|l_i|-1,$ and by Def \ref{def:5}, it is a Nash Equilibrium.
\end{proof}
\section{Decomposition of a game complex}\label{DGC}
The set of dual cells of dimension $t$ is denoted by $C_t(bcsd(K_G))$ and is called the dual $m$-chain space. These spaces are used to define dual chain complex as follows
 \begin{align}\label{eqn5:1}
	 	0&\rightarrow
		C_{n-1}(bcsd(K_G)) \xrightarrow{ \hat{\partial}_{n-2}}C_{n-2}(bcsd(K_G))\xrightarrow{ \hat{\partial}_{n-1}}\dots \\\nonumber
		&\xrightarrow{ \hat{\partial}_{1}} C_{1}(bcsd(K_G))\xrightarrow{ \hat{\partial}_{0}} C_{0}(bcsd(K_G))\rightarrow 0.
\end{align}
where $\hat{\partial}$ is the boundary operator on dual cells.
Similarly, the dual $t$-cochains of the barycentric subdivision complex $bcsd({K}_G)$ is defined by
\begin{align}
	\begin{matrix}
		C^t(bcsd({K}_G),\mathbb{R}):=\{{f}_t:C_{t}(bcsd({K}_G))\rightarrow \mathbb{R}\}.
	\end{matrix}
\end{align}
Equip vector spaces of $m$-cochains $C^m(K_G,\mathbb{R})$ with an inner product by
	\begin{align}\label{eqn:5:3}
		<{f}_m,{g}_m>_m := \sum_{[{s}_0,\dots,{s}_m]\in C_{m}(bcsd({K}_G))}{f}_m(s_0,\dots,s_m){g}_m(s_0,\dots,s_m),
	\end{align}
for $0\leq m\leq n-1$. The adjoint operator of $\hat{\partial}_t$ with respect to the inner product (\ref{eqn:5:3}) is the co-boundary operator $\delta_t$. So we have the dual co-chain complex as follows
\begin{align}\label{eqn5:2}
			0\leftarrow
			&C^{n-1}(bcsd({K}_G)) \xleftarrow{ \delta_{n-1}} C^{n-2}(bcsd({K}_G))\xleftarrow{ \delta_{n-2}} \dots \\\nonumber
			&\xleftarrow{ \delta_{1}} C^{1}(bcsd({K}_G))\xleftarrow{ \delta_{0}} C^{0}(bcsd({K}_G))\leftarrow 0.
\end{align}
The $t$-Laplace Beltrami operator on a dual $t$-cochain space $C^t(bcsd(K_G),\mathbb{R})$
is defined by 
\begin{equation*}
	\Delta_t=\delta_{t-1}o\hat{\partial}_{t-1}+\hat{\partial}_{t}o\delta_{t}.
\end{equation*}
We say that a dual $t$-cochain $f_t$ is harmonic if $\Delta_t(f_t)=0.$ 
\begin{definition}\label{Def5:1}
	A potential game $P_G$ is a subspace of $C^{n-1}(K_G)$ for which there exists a function $\Phi:P\rightarrow \mathbb{R}$ satisfying 
	\begin{align*}
		\Phi(x_i,X_{\hat{i}})-\Phi(y_i,Y_{\hat{i}}) &= e_i(x_i,X_{\hat{i}})-e_i(y_i,Y_{\hat{i}}),\\
		&= \omega(\hat{X},\hat{Y}),
	\end{align*}
in every component of the covering space $K^*_G$.\\
The sub-space $H_G$ of $C^{n-1}(K_G)$ is called a harmonic game for which the associated dual flow subspace of $p^{-1}(*\sigma)$ for every $*\sigma$ in $K^*_G$  is satisfied $\Delta_1w=0$. That is
\begin{equation*}
	(\Delta_{1}w)(\hat{X},\hat{Y})=\sum_{i=1}^n\Delta_{1}(e_i(x_i,X_{\hat{i}})-e_i(y_i,Y_{\hat{i}}))=0.
\end{equation*}
\end{definition}
\begin{theorem}
	The vector space $C^{n-1}(K^*_G)$ has an orthogonal decomposition of the potential, harmonic and non-strategic
	games, i.e.
	\begin{equation*}
		C^{n-1}(K^*_G)=P_G\oplus H_G\oplus N_G.
	\end{equation*}
\end{theorem}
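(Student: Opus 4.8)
The plan is to reduce the statement to the combinatorial Hodge (discrete Helmholtz) decomposition of the dual $1$-cochain space and then to transport the resulting splitting back to $C^{n-1}(K^{*}_{G})$ along the comparison map. A game here amounts to a tuple $(e_{1},\dots,e_{n})$ of player payoffs (whose sum $f=\sum_{i}e_{i}$ is the weight carried by the facets of $K^{*}_{G}$), and via (\ref{eqn:7}) it determines a weighted dual flow $\omega$ on the dual edges of $bcsd(K_{G})$; the assignment $g\mapsto\omega$ is a linear map
\[
D:C^{n-1}(K^{*}_{G})\longrightarrow C^{1}(bcsd(K_{G})),
\]
which on each component $C_{j}$ of the covering complex $C_{G}$ records the differences $e_{j}(x_{j},X_{\hat{j}})-e_{j}(y_{j},Y_{\hat{j}})$ along comparable facets. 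Its kernel is exactly the set of games in which no player's payoff depends on that player's own strategy, i.e.\ the non-strategic games, so we set $N_{G}:=\ker D$. Since $C^{n-1}(K^{*}_{G})$ carries the inner product (\ref{eqn:5:3}), this already gives an orthogonal splitting $C^{n-1}(K^{*}_{G})=N_{G}\oplus N_{G}^{\perp}$, and $D$ restricts to a linear isomorphism of $N_{G}^{\perp}$ onto the flow space $\mathcal{F}:=\operatorname{im}D\subseteq C^{1}(bcsd(K_{G}))$.

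Next I would apply the Hodge decomposition to the dual cochain complex (\ref{eqn5:2}) in degree one. With respect to the inner products (\ref{eqn:5:3}) and the adjoint pair $(\hat{\partial}_{\bullet},\delta_{\bullet})$ one obtains the orthogonal direct sum
\[
C^{1}(bcsd(K_{G}))=\operatorname{im}\delta_{0}\ \oplus\ \ker\Delta_{1}\ \oplus\ R,
\]
where $\operatorname{im}\delta_{0}$ is the space of exact flows (dual coboundaries of $0$-cochains), $\ker\Delta_{1}$ is the space of harmonic flows for the Laplace--Beltrami operator $\Delta_{1}$ of Section~\ref{DGC}, and $R$ is the co-exact part. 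Unwinding the definition of $\omega$, a flow lies in $\operatorname{im}\delta_{0}$ exactly when it equals $\delta_{0}\Phi$ for a $0$-cochain $\Phi$ on the dual vertices, i.e.\ when $\Phi(x_{i},X_{\hat{i}})-\Phi(y_{i},Y_{\hat{i}})=e_{i}(x_{i},X_{\hat{i}})-e_{i}(y_{i},Y_{\hat{i}})$ in every component of $C_{G}$; so the potential games of Definition~\ref{Def5:1} are precisely $D^{-1}(\operatorname{im}\delta_{0})$, and the harmonic games precisely $D^{-1}(\ker\Delta_{1})$. Splitting these off from $N_{G}$, I put $P_{G}:=D^{-1}(\operatorname{im}\delta_{0})\cap N_{G}^{\perp}$ and $H_{G}:=D^{-1}(\ker\Delta_{1})\cap N_{G}^{\perp}$; it then remains to show (i) that $\mathcal{F}=\operatorname{im}\delta_{0}\oplus\ker\Delta_{1}$, so that $R$ contributes nothing, and (ii) that this splitting of $N_{G}^{\perp}$ is orthogonal in $C^{n-1}(K^{*}_{G})$.

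For (i) the key point is that a flow coming from a game is \emph{curl-free}: each $2$-cell of $bcsd(K_{G})$ is the dual $D(\sigma_{n-3})$ of an $(n-3)$-simplex, hence its boundary runs through facets that are pairwise comparable in a single fixed coordinate, and along such a loop the defining differences $e_{i}(\sigma)-e_{i}(\beta)$ telescope to zero. Thus $\mathcal{F}\subseteq\ker\delta_{1}=\operatorname{im}\delta_{0}\oplus\ker\Delta_{1}$, and since conversely every exact or harmonic flow is realised by some game, equality holds (when $n\le2$, as in the two-player example above, there are no such $2$-cells and this step is vacuous). For (ii), I would use the isomorphism $D\colon N_{G}^{\perp}\to\mathcal{F}$ to transport the orthogonal splitting $\mathcal{F}=\operatorname{im}\delta_{0}\oplus\ker\Delta_{1}$: because the pairing (\ref{eqn:5:3}) on facet weights and the pairing on dual $1$-cochains are both standard basis pairings, $D$ is, up to the positive weights attached to the dual edges, an isometry on $N_{G}^{\perp}$, so $\operatorname{im}\delta_{0}\perp\ker\Delta_{1}$ pulls back to $P_{G}\perp H_{G}$. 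Combining this with the orthogonal split $C^{n-1}(K^{*}_{G})=N_{G}\oplus N_{G}^{\perp}$ yields the desired $C^{n-1}(K^{*}_{G})=P_{G}\oplus H_{G}\oplus N_{G}$.

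I expect the genuine obstacle to lie in the two claims bundled into (i) and (ii). One must check carefully that the $2$-cells of the dual complex of $K^{*}_{G}$ really are dual to $(n-3)$-simplices whose cofaces form single-coordinate-comparable families, so that the telescoping argument applies, and that $\operatorname{im}D$ is all of $\ker\delta_{1}$ rather than a proper subspace; and one must confirm that the inner product (\ref{eqn:5:3}) is precisely the one for which the algebraic sum $P_{G}\oplus H_{G}\oplus N_{G}$ is orthogonal, adjusting the weights on the dual edges if necessary. Everything else is the standard finite-dimensional discrete Hodge theory already assembled in Section~\ref{DGC}.
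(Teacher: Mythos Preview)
Your argument and the paper's run on the same engine---the Hodge/Helmholtz decomposition of the dual $1$-cochain space $C^{1}(bcsd(K_{G}))$---but they deploy it differently. The paper never introduces your comparison map $D$: it simply writes the full three-term Hodge splitting
\[
C^{1}(bcsd(K^{*}_{G}))=\operatorname{im}\hat{\partial}_{1}\ \oplus\ \ker\Delta_{1}\ \oplus\ \operatorname{im}\delta_{0}
\]
and then declares in one sentence that the flows in these three pieces ``determine adjacent situations'' in $K^{*}_{G}$, labelling the corresponding subspaces $P_{G}$, $H_{G}$, $N_{G}$ in that order. In particular the paper's $N_{G}$ is the pull-back of $\operatorname{im}\delta_{0}$, \emph{not} the kernel of a game-to-flow map, and its $P_{G}$ corresponds to $\operatorname{im}\hat{\partial}_{1}$ rather than to $\operatorname{im}\delta_{0}$. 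That last assignment is hard to reconcile with Definition~\ref{Def5:1}, which characterises potential games by $\omega=\delta_{0}\Phi$; your identification $P_{G}\leftrightarrow\operatorname{im}\delta_{0}$ and $N_{G}=\ker D$ is the one consistent with that definition and with the Candogan--Menache--Ozdaglar--Parrilo scheme the paper is following.

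The practical consequence is that the paper's proof sidesteps both obstacles you flag: it never needs your curl-freeness step (i), because it retains the co-exact summand $\operatorname{im}\hat{\partial}_{1}$ as one of the three pieces rather than discarding it, and it does not address orthogonality (ii) on $C^{n-1}(K^{*}_{G})$ beyond the Hodge orthogonality already present on flows. So your route is more careful and more faithful to the stated definitions, but it also carries more to verify; the paper's route is shorter precisely because it takes the transport from flows to games for granted. If your goal is to reproduce the paper's argument, drop $D$ and simply pull back all three Hodge summands; if you keep $N_{G}=\ker D$, be aware that you are proving a somewhat different (and arguably cleaner) decomposition than the one the paper's proof literally establishes.
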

\begin{proof}
	From the chain complex (\ref{eqn5:1}) and (\ref{eqn5:2}), we have $C^{1}(bcsd(K_G))=ker(\delta_{1})\oplus im(\hat{\partial}_{1})$. Furthermore, $\delta_1$ is a linear operator on  a vector space $C^1(bcsd(K_G))$ and $ker(\delta_1)$ is the subspace of $C^1(bcsd(K_G))$, so we have $C^{1}(bcsd(K_G))=ker(\delta_1)\oplus (ker(\delta_1))^\bot$.
	Since we have $(ker(\delta_1))^\bot=im(\hat{\partial}_1)$, we get $C^{1}(bcsd(K_G))=ker(\delta_1)\oplus im(\hat{\partial}_1)$.
	On the other hand, we have
	\begin{align*}
		ker(\delta_{1})&=C^1(bcsd(K_G))\bigcap ker(\delta),\\
		&=[im(\delta_{0})\oplus ker(\hat{\partial}_{0})]\bigcap ker(\delta_{1}),\\
		&=[ker(\delta_{1})\bigcap ker(\hat{\partial}_{0})]\oplus[ker(\delta_{1})\bigcap im(\delta_{0})].
	\end{align*}
Furthermore, $[ker(\delta_{1})\bigcap ker(\hat{\partial}_{0})]=ker(\hat{\partial}_1o\delta_1+\delta_0o\hat{\partial}_0)$ and $ker(\delta_{1})\bigcap im(\delta_{0})=im(\delta_0)$, so we get a decomposition for $C^1(bcsd(K^*_G))$ as follows
\begin{align*}
C^1(bcsd(K^*_G))=im\hat{\partial}_1\oplus ker(\hat{\partial}_1o\delta_1+\delta_0o\hat{\partial}_0)\oplus im(\delta_0).
\end{align*} 
On the other hand, each flow in these components determine some adjacent situations in the game complex $K^*_G$. So according to the Definition \ref{Def5:1}, the adjacent situations of the flows in $im\hat{\partial}_1$ is denoted by $P_G$ and the adjacent situations of the flows in $ker(\hat{\partial}_1o\delta_1+\delta_0o\hat{\partial}_0)$ and $im(\delta_0)$ are denoted by $H_G$ and $N_G$, respectively. Obviously, we get $C^{n-1}(K^*_G)=P_G\oplus H_G\oplus N_G$.
\end{proof}


\section*{Acknowledgment}
This research was conducted at Amirkabir university (Polytechnic of Tehran) with funding by the Science Research Institute.

	
{Amirkabir University of Technology (Polytechnic of Tehran),\\
Department of  Mathematics and Computer science,\\ 
	shojaee$_{-}$neda@aut.ac.ir\\
	mmreza@aut.ac.ir}
\end{document}